\newtheorem{theorem}{Theorem}
\newtheorem{proposition}[theorem]{Proposition}
\newtheorem{remark}[theorem]{Remark}
\newtheorem{definition}[theorem]{Definition}
\begin{document}

\title{SOME PROBLEMS IN ADDITIVE NUMBER THEORY (v.3)} 

\author{Andrei Allakhverdov}
\address{46 Sedykh st, flat 5, 220103, Minsk, Belarus}
\email{andall1952@gmail.com}

\begin{abstract}
In this article we present method of solving some additive problems with primes.
The method may be employed to the Goldbach-Euler conjecture and the twin primes conjecture.
The presented method  also makes it possible to obtain some interesting results related to the densities of sequences. 
The method is based on the direct construction of the Eratosthenes-type double sieve and does not use empirical and heuristic reasoning.
\end{abstract}

\keywords {Primes; Eratosthenes-type double sieve; additive problems; Goldbach-Euler conjecture; twin prime conjecture; first Hardy-Littlewood conjecture; difference between primes; ordered set of primes; density of sequences.}
 
\subjclass[2010]{11A41, 11B05, 11B13, 11P32.}

\date{date}

\thanks{}

\maketitle


\hfill \begin{minipage}[h]{0.75\textwidth}
All results of the profoundest mathematical investigation must ultimately be expressible in the simple form of properties of integers.

 \begin{flushright}
Leopold Kronecker
\end{flushright}

Regarding the relative powers of elementary sieve methods and the analytical methods one usually considers that the latter should be more powerful... 
But history has shown that such views are not totally correct.

 \begin{flushright}
H.E. Richert, Lectures on Sieve Methods, \\
Tata Institute of Fundamental Research, 1976
\end{flushright}
 \end{minipage}

\section{Background and conventions} 
The method is based on some properties of $\mathbf{Z}/6\mathbf{Z}$, namely 
the residue classes $\bar{1}_6$ and $\bar{5}_6$ contain all odd primes except prime number $3$, 
each of the even residue classes modulo $6$ may be represented as a sum and a difference of $\bar{1}_6$ and/or $\bar{5}_6$, 
the sequences $\bar{1}_6$ and $\bar{5}_6$ are well-structured by all prime numbers. 
It allows us to construct the double sieve.

Let ${P}$ denote the set of all primes. We assume that $p \in \mathcal{P}$, where $\mathcal{P} = {P} \setminus \{ 2, 3\}$.

We will use the notation $\#^* S_m$ for the number of nonzero terms of the segment of sequence $S_m$.

\section{Preliminaries}

\subsection{Well-structured sequences}\label{primes_and_composite} 
\begin{definition} 
{We say that a sequence $S$ is well-structured by number $q$ if the indices of elements of sequence $S$ that are divisible by $q$ form an arithmetic progression with the common difference $q$.} 
\end{definition}
Let $ A = \{ \, a_i \colon a_i = 6i-1, i \in \mathbf{N} \} $, and let $ B = \{ \, b_i \colon b_i = 6i+1, i \in \mathbf{N}  \} $:
\tabcolsep=0.2em
 \begin{center}
\begin{tabular} {rrrrrrrrrrrrrrrrrrrrrr}
{${A}$  = } \{ & {5,} &{11,} &{17,} &{23,} &{29,} &{35,} & {41,} & {47,} & {53,} & {59,} & {65,} & {71,} & {77,} & {83,}  & {89,} & {95,} &  {101,} &  \ldots \}, \\
{${B}$  = } \{ & {7,} &{13,} &{19,} &{25,} &{31,} &{37,} & {43,} & {49,} & {55,} & {61,} & {67,} & {73,} & {79,} &  {85,}  & {91,} & {97,} & {103,} & \ldots \}.     
\end{tabular}
\end{center} 
The terms of sequences $A$ and $B$ either are primes $p \in \mathcal{P}$ or products of primes $p \in \mathcal{P}$. 
Obviously $A \cup B \supset  \mathcal P$.
\begin{theorem}  
The infinite sequences $A$ and $B$ are well-structured over all primes $p \in \mathcal{P}$. 
\end{theorem}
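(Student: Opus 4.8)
The plan is to observe that, for these two sequences, the property ``well-structured by $p$'' is just a restatement of the fact that a nondegenerate linear congruence modulo $p$ has solution set equal to a single residue class. So the whole proof reduces to a one-line divisibility computation together with a remark explaining why $2$ and $3$ must be excluded.

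First I would fix $p \in \mathcal{P}$ and record that $\gcd(6, p) = 1$, precisely because $p \notin \{2,3\}$; hence $6$ is invertible in $\mathbf{Z}/p\mathbf{Z}$. For the sequence $A$ I would rewrite the condition $p \mid a_i$ as $6i \equiv 1 \pmod p$, which by invertibility of $6$ is equivalent to $i \equiv 6^{-1} \pmod p$. Letting $r_A$ be the least positive integer in this residue class, one gets $\{\, i \in \mathbf{N} : p \mid a_i \,\} = \{ r_A,\ r_A + p,\ r_A + 2p,\ \dots \}$, an arithmetic progression with common difference $p$; it is nonempty and infinite because each block of $p$ consecutive positive integers meets the class of $r_A$ in exactly one point. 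This is exactly the definition of $A$ being well-structured by $p$. For $B$ I would repeat the argument verbatim with $6i \equiv -1 \pmod p$, i.e. $i \equiv -6^{-1} \pmod p$, producing a residue $r_B$ and the progression $\{ r_B,\ r_B + p,\ r_B + 2p,\ \dots \}$. Since $p$ was an arbitrary element of $\mathcal{P}$, this finishes the theorem.

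There is no real obstacle here; the only point requiring a moment's care is the role of the hypothesis $p \neq 2, 3$. Without it the congruence $6i \equiv \pm 1 \pmod p$ has no solution at all — no term of $A$ or $B$ is ever even, and none is ever a multiple of $3$ — so the index set is empty and the ``arithmetic progression with common difference $p$'' description degenerates; this is why the statement is restricted to $p \in \mathcal{P}$. I would also note in passing that, $p$ being prime, the common difference of the progression of indices is forced to be exactly $p$ rather than a proper divisor of it, so nothing further needs to be checked.
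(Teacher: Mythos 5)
Your proof is correct, and it takes a genuinely different route from the paper's. You fix a prime $p\in\mathcal{P}$, use $\gcd(6,p)=1$ to invert $6$ modulo $p$, and observe that $p\mid a_i$ (resp.\ $p\mid b_i$) is equivalent to $i$ lying in the single residue class $6^{-1}\pmod p$ (resp.\ $-6^{-1}\pmod p$), so the index set is exactly one arithmetic progression of common difference $p$; your closing remarks about why $p=2,3$ are excluded and why the difference cannot drop to a proper divisor of $p$ are also sound. The paper argues instead by explicit factorization in $\mathbf{Z}$: it writes a composite term of $A$ as $a_j b_k=(6j-1)(6k+1)$ and a composite term of $B$ as $a_j a_{j'}$ or $b_k b_{k'}$, solves for the index $i$, and reads off the four families of progressions (\ref{a_a_composite})--(\ref{b_b_composite}), e.g.\ $a_j\mid a_i\Leftrightarrow i=j+k a_j$, concluding via $A\cup B\supset\mathcal{P}$. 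Your congruence argument is shorter, treats prime and composite terms uniformly (the paper's restriction to composite $a_i$, $b_i$ leaves the term equal to $p$ itself handled only implicitly), and makes transparent that the solution set is a full residue class, i.e.\ both directions of the ``if and only if.'' What the paper's more laborious route buys is the explicit index formulas with their signs ($i=\pm j+\lambda a_j$, $i=\mp k+\lambda b_k$), which are reused later, notably when (\ref{a_a_composite}) is combined with (\ref{b_a_composite}) and (\ref{a_b_composite}) with (\ref{b_b_composite}) in the double-sieve theorem to exhibit two distinct zero-progressions per prime; to serve that purpose your proof would need the small supplement of recording the two residues $\pm 6^{-1}\bmod p$ and noting they are distinct for $p>2$.
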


\begin{proof}
If $a_i$ is composite, then there exist $j \not=0$, $ k \not= 0$ such that   
\begin{align*} 
 { a_i=6i - 1 = a_j  b_k = (6j - 1)(6k + 1) = 36jk - 6k + 6j - 1}. 
\end{align*}
In this case, we have two expressions for $i$:
\begin{align*}
    i &= k\,(6j - 1) + j = k a_j +j, 	 \\	
    i &= j\,(6k + 1) - k = jb_k -k,       
  \end{align*}
	that determine two families of arithmetic progressions 
\begin{align} 
	a_j \mid a_i & \Leftrightarrow i = + j + k a_j, \label{a_a_composite} \\ 
	b_k \mid a_i & \Leftrightarrow i = - k + j b_k, \label{a_b_composite}
	\end{align}	

If $b_i$ are composite, then there exist $j\not=0$, $ j'\not=0,$ or $ k\not=0, k'\not=0$ such that at least one of the following equalities will hold:
\begin{align*}
  b_i &= 6i + 1 = a_j a_{j'} = \left( {6j - 1} \right)\left( {6j'- 1} \right)  = 6\left( {6 j j' - j - j' } \right) + 1, \\  
  b_i &= 6i + 1 = b_k b_{k'} = \left( {6k + 1} \right)\left( {6k'+ 1} \right)  = 6\left( {6 k k' + k + k'} \right) + 1.   
\end{align*} 
In these cases we also have two expressions for $i$:
\begin{align*}
   i &= j' ( 6j - 1 ) - j = j' a_j - j, 	 \\  
   i &= k' ( 6k + 1 ) + k = k'b_k + k,          	
\end{align*}
that determine two families of arithmetic progressions 	
	\begin{align}  
	a_j \mid b_i &\Leftrightarrow i = - j + j' a_j ,  \label{b_a_composite}	\\
	b_k \mid b_i &\Leftrightarrow i = + k + k' b_k.   \label{b_b_composite}	          
	\end{align}
Expressions (\ref{a_a_composite}), (\ref{a_b_composite}), (\ref{b_a_composite}), and (\ref{b_b_composite}) implies that the sequences $A$ and $B$ are well-structured by all numbers $a_j \in A$ and $b_k \in B$. 
Since $A \cup B \supset  \mathcal P$ the proof is complete. 
\end{proof}		

\subsection{Sieving of sequences} \label{sieve_method}
That is, for any $p \in \mathcal{P}$ in each of the sequences ${A}$ and ${B}$ there is exactly one infinite subsequence of terms, which are divisible by $p$ and form an arithmetic progression with the common difference $6p$. 
The indices of these terms also form an arithmetic progression with the common difference $p$. 

We define the sieving of a sequence $S$ by a number $p$ as replacing composite terms of sequence $S$ that are a multiple of $p$ by $0$. 
We denote this infinite subsequence of zeros by $\{0\}_p$.
A sequence $S$ sifted by $p$ we denote by $ S {\setminus \lambda p}$.
A sequence $S$ sifted over all $p \in \mathcal P$ we denote by $S {\setminus \lambda \mathcal P}$. 
We write $S_m {\setminus \lambda p}$ and $S_m {\setminus \lambda \mathcal P}$ for sieving of a segment $S_m$.  

\subsection{Sequences $\mathcal A$, $\mathcal B$, $\mathcal L$, and $\mathcal R$} \label{sequences L and R}

Let $\mathcal A = A \setminus \lambda \mathcal P$ and let $\mathcal B = B \setminus \lambda \mathcal P$.
The rules of correspondence between sequences $\mathcal L, \mathcal R$ on one side and the sequences $A, B$ on the other side are 
$ {\mathcal L} = \left\{  l_i \colon {l_i} = i \text{ if } {a_i} \in \mathcal{P}; \right.$ $\left. {l_i} = 0  \text{ if } {a_i} \notin \mathcal{P} \right\}$ and
$ {\mathcal R} = \left\{  r_i \colon {r_i} = i \text{  if  } {b_i} \in \mathcal{P};  {r_i} = 0  \text{  if  } {b_i} \notin \mathcal{P} \right\},$
i.e.  
\tabcolsep=0.15em
 \begin{center}
\begin{tabular} {rrrrrrrrrrrrrrrrrrrrrr}
{${\mathcal A}$  = } \{ &{5,} &{11,} &{17,} &{23,} &{29,} &{0,} &{41,} &{47,} &{53,} &{59,} &{0,}  &{71,} &{0,}  &{83,} &{89,} &{0,}  &{101,} &  \ldots \}, \\
\vspace{10pt}
{${\mathcal L}$  = } \{ &{1,} &{2,} &{3,} &{4,} &{5,} &{0,} &{7,} &{8,} &{9,} &{10,} &{0,}  &{12,} &{0,}  &{14,} &{15,} &{0,}  &{17,} &  \ldots \}, \\
{${\mathcal B}$  = } \{ &{7,} &{13,} &{19,} &{0,} &{31,} &{37,} &{43,} &{0,} &{0,} &{61,} &{67,} &{73,} &{79,} &{0,}  & {0,} &{97,} &{103,} & \ldots  \}, \\
{${\mathcal R}$  = } \{ &{1,} &{2,} &{3,} &{0,} &{5,} &{6,} &{7,} &{0,} &{0,} &{10,} &{11,} &{12,} &{13,} &{0,}  & {0,} &{16,} &{17,} & \ldots  \}.
\end{tabular}     
\end{center}
All nonzero terms of the sequences ${\mathcal A}$ and ${\mathcal B}$ are prime numbers.
All nonzero terms of the sequences ${\mathcal L}$ and ${\mathcal R}$ are indices of appropriate nonzero terms of the sequences ${\mathcal A}$ and ${\mathcal B}$, respectively.
Obviously, the sequences $\mathcal L$ and $\mathcal R$ inherit the structures of the sequences $\mathcal A$ and $\mathcal B$ in the sense of distribution of zero terms.  
The indices of zero terms of the sequences ${\mathcal A}$ and ${\mathcal L}$ are determined by the right-hand sides of (\ref{a_a_composite}) and 
(\ref{a_b_composite}); the indices of zero terms of the sequences ${\mathcal B}$ and ${\mathcal R}$ are determined by the right-hand sides of 
(\ref{b_a_composite}) and (\ref{b_b_composite}). 

Now we define sequences 
${\mathcal A}^{ m'} = \{  a_i^{ m'} \colon a_i^{ m'} =  a_{i + m'} \}$ \text{and} 
${\mathcal B}^{ m'} = \{  b_i^ { m'} \colon b_i^{ m'} =  b_{i + m'} \}$, 
as the remainder of sequences ${\mathcal A}$ and ${\mathcal B}$ after the $m'$-th  term, for example, 
\tabcolsep=0.15em
 \begin{center}
\begin{tabular} {rrrrrrrrrrrrrrrrrrrrrr}
{${\mathcal A}^{\, 5}$ = } \{&{0,}&{41,}&{47,}&{53,}&{59,} &{0,} &{71,} &{0,} &{83,} &{89,} &{0,} &{101,}&{107,}&{113,} &{0,} &{0,}&{131,} &  \ldots \}, \\
{${\mathcal B}^{\, 6}$ = } \{&{43,} &{0,}&{0,}&{61,}&{67,} &{73,} &{79,} &{0,} &{0,} &{97,} &{103,} &{109,} &{0,} &{0,} &{127,} &{0,} &{139,} & \ldots \}.
\end{tabular}
\end{center} 
Just the same for sequences ${\mathcal L}^{ m'}$ and ${\mathcal R}^{ m'}$ we get
\tabcolsep=0.15em
 \begin{center}
\begin{tabular} {rrrrrrrrrrrrrrrrrrrrrr}
{${\mathcal L}^{\, 5}$ = } \{&{0,} &{7,} &{8,} &{9,}&{10,} &{0,} &{12,} &{0,} &{14,} &{15,} &{0,}  &{17,} &{18,} &{19,} &{0,} &{0,} &{22,} & \ldots \}, \\
{${\mathcal R}^{\, 6}$ = } \{&{7,} &{0,} &{0,} &{10,} &{11,} &{12,} &{13,} &{0,} & {0,} &{16,} &{17,} &{18,} &{0,} &{0,} &{21,} &{0,} &{23,} & \ldots \}.  
\end{tabular}
\end{center} 
By $S\{ g \}$ we denote a sequence, the terms of which someway determined.

\subsection{Direct and inverse segments of sequences}\label{seg_of_seq}

Let $S_{m} = \{ s_i \}_{i=1}^{i=m}$ denotes the initial segment of length $m$ of sequence $S$, for example, 
\tabcolsep=0.3em
 \begin{center}
\begin{tabular} {rrrrrrrrrrrrrrrr}
{${\mathcal A}_{14}$ } &= ({5,} & {11,} & {17,} & {23,} & {29,} & {0,} & {41,} & {47,} & {53,} & {59,} & { 0,} & {71,} & { 0,} & {83}),   \\
{${\mathcal B}_{14}$ } &= ({7,} & {13,} & {19,} & { 0,} & {31,} & {37,} & {43,} & {0,} & {0,} & {61,} & {67,} & {73,} & {79,} & { 0}).   
\end{tabular}
\end{center} 
We call these segments \textit{the direct segments}, and we call the segments 
\tabcolsep=0.295em
 \begin{center}
\begin{tabular}{rrrrrrrrrrrrrrrr}
{${\mathcal A'}_{14}$  } &= ({83,} & { 0,} & {71,} & { 0,} & {59,} & {53,} & {47,} &{41,} &{0,} &{29,} &{23,} &{17,} &{11,} & {5}), \\
{${\mathcal B'}_{14}$  } &= ({ 0,} & {79,} & {73,} & {67,} & {61,} & {0,} & {0,} &{43,} &{37,} &{31,} &{0,} &{19,} &{13,} & {7}),  \\ 
\end{tabular}
\end{center} 
\textit{the inverse segments}.
Just the same for sequences ${\mathcal L}_{m}$, ${\mathcal R}_{m}$, $\mathcal L'_{m} $ and $\mathcal R'_{m}$:
\tabcolsep=0.3em
 \begin{center}
\begin{tabular}{lrrrrrrrrrrrrrrrr}
{${\mathcal L}_{14} $} &= ({ 1,} & {2,} & {3,} & {4,} & {5,} & {0,} & {7,} & {8,} & {9,} & {10,} & { 0,} & {12,} & { 0,} & {14}),   \\
{${\mathcal R}_{14} $} &= ({ 1,} & {2,} & {3,} & {0,} & {5,} & {6,} & {7,} & {0,} & {0,} & {10,} & {11,} & {12,} & {13,} & { 0}), \\
{${\mathcal L'}_{14}$} &= ({14,} & { 0,} & {12,} & { 0,} & {10,} & {9,} & {8,} &{7,} &{0,} &{5,} &{4,} &{3,} &{2,} & {1}), \\
{${\mathcal R'}_{14}$} &= ({ 0,} & {13,} & {12,} & {11,} & {10,} & {0,} & {0,} &{7,} &{6,} &{5,} &{0,} &{3,} &{2,} & {1}). 
\end{tabular}
\end{center}

\subsection{}

Let $\pi (a,n)$ denote the number of primes not exceeding $n$ that are of the form $6i-1$, and let $\pi (b,n)$ denote the number of primes not exceeding $n$ that are of the form $6i+1$. 
For the following discussion, we take $n=6m$. Then, 
$$\begin{array}{*{20}l}
   {\pi }\left(a, n \right) &= {\pi }\left(a, 6m \right) &= \#^* {\mathcal A}_{m} &= \#^* {\mathcal L}_{m} &= \sum\nolimits_{l_i  \leqslant m, \, l_i  \ne 0} 1 , 	\\
	 {\pi }\left(b, n \right) &= {\pi }\left(b, 6m \right) &= \#^* {\mathcal B}_{m} &= \#^* {\mathcal R}_{m} &= \sum\nolimits_{r_i  \leqslant m, \, r_i  \ne 0} 1 .  
 \end{array} $$

\subsection{Even numbers}

Let ${\mathcal G} = \{\mathrm g \}$ be the set of all positive even numbers. 
We partition set ${\mathcal G} \setminus \{2\}$ into three disjoint sequences (residue classes modulo $6$) and we will assume that   
${\mathcal G^1} = \{ \mathrm g_m^1 : \mathrm g_m^1 = 6m - 2 \}$, 
${\mathcal G^2}=  \{ \mathrm g_m^2 : \mathrm g_m^2 = 6m \}$, 
${\mathcal G^3}=  \{ \mathrm g_m^3 : \mathrm g_m^3 = 6m + 2\}$.  
Each integer $m$ determines three consecutive even numbers, one from each of these classes.

\subsection{Summation of Sequences}\label{sum_of_segments}

We define the subtraction of two sequences $S' = \{s'_i\}$ and $S'' = \{s''_i\}$ as a sequence
$S = \{s_i: s_i = s'_i - s''_i  \text{  if  } s'_i s''_i \ne 0; s_i = 0  \text{  if  } s'_i s''_i = 0 \}$. 

We define the addition of segments of two sequences $S'_m = \{s'_i\}_{i=1}^{i=m}$ and $S''_m = \{s''_i\}_{i=1}^{i=m}$ 
as the segment of sequence $S_m = \{s_i: s_i = s'_i + s''_i  \text{  if  } s'_i s''_i \ne 0; s_i = 0  \text{  if  } s'_i s''_i = 0 \}$.  

We define the addition of $k>2$ sequences $S^1 = \{s^1_i\}, S^2 = \{s^2_i\}, \ldots S^k = \{s^k_i\}$ as sequence
$S = \{s_i: s_i = 1  \text{ if } s^1_i \cdot s^2_i \cdot \ldots \cdot s^k_i \ne 0; s_i =0  \text{ if } s^1_i \cdot s^2_i \cdot \ldots \cdot s^k_i = 0 \}$.

We assume that a sequence $S = S' + S''$ sieved by $p$ if both sequences $S'$ and $S''$ sieved by $p$, 
that is $S \setminus \lambda p = S' \setminus \lambda p + S'' \setminus \lambda p$. 

\subsection{Binary additive problems} 

\subsubsection{Pairs of primes with a fixed difference} \label{primes_with_fixed_gap}

Let ${\pi}_{{\mathrm g}}(n)$ be a number of primes $p$ not exceeding $n$ and such that  $p' = p + \mathrm g$ are also prime.  
All even numbers of each class may be represented as a difference of two odd integers from $A$ and/or $B$ in the only way, that is, 
$\mathrm g_{m'}^1=a_{i+m'}-b_i$, $\mathrm g_{m'}^2=(a_{i+m'}-a_i)=(b_{i+m'}-b_i)$, and $\mathrm g_{m'}^3=b_{i+m'}-a_i$. 
These identities allow us to find the solution of this problem from the constructions
\begin{align}
{\pi}_{{\mathrm g}^1_{m'}} (n+1) &= \#^* \left( {\mathcal A}^{m'}_{m}-{\mathcal B}_{m} \right) \label{a-b} \\
                                 &= \#^* \left( {\mathcal L}^{m'}_{m}-{\mathcal R}_{m} \right),  \nonumber\\   
{\pi}_{{\mathrm g}^2_{m'}} (n+1) &= \#^* \left( {\mathcal A}^{m'}_{m}-{\mathcal A}_{m} \right) + \#^* \left( {\mathcal B}^{m'}_{m}-{\mathcal B}_{m} \right) \label{a-a}\\  
	                               &= \#^* \left( {\mathcal L}^{m'}_{m}-{\mathcal L}_{m} \right) + \#^* \left( {\mathcal R}^{m'}_{m} - {\mathcal R}_{m} \right), \nonumber\\ 
  {\pi}_{{\mathrm g}^3_{m'}} (n) &= \#^* \left( {\mathcal B}^{m'}_{m}-{\mathcal A}_{m} \right) \label{b-a}  \\
	                               &= \#^* \left( {\mathcal R}^{m'}_{m}-{\mathcal L}_{m} \right). \nonumber
  \end{align}
We note that ${\pi}_{{\mathrm g}^1_{m'}} (n+1) - {\pi}_{{\mathrm g}^1_{m'}} (n) \leqslant 1$ and ${\pi}_{{\mathrm g}^2_{m'}} (n+1) - {\pi}_{{\mathrm g}^2_{m'}} (n) \leqslant 1$.

\begin{remark} \label{rem_on_diff}
If $3 \mid \mathrm g$ then ${\pi}_{\mathrm g}(n)$ is equal to the sum of the numbers of nonzero terms in two segments.
\end{remark}

For example, we will find the number of pairs $p,  p + 28$ where $p \leqslant 126 + 1 $.  
We have $28 = \mathrm g_5^1 \in \mathcal G^1$, $m' = 5$, and $m = 126/6 = 21$. Now applying construction (\ref{a-b}), we get  
\tabcolsep=0.085em
\begin{center}
\begin{tabular}{rrrrrrrrrrrrrrrrrrrrrrrrrrr}
 ${\mathcal A}^5_{21} =$ &{0,}&{41,}&{47,}&{53,}&{59,}&{0,}&{71,}&{0,}&{83,}&{89,}&{0,}&{101,}&{107,}&{113,}&{0,}&{0,}&{131,}&{137,}&{0,}&{149,}&{0} \\ 
   ${\mathcal B}_{21} =$ & {7,} &{13,}&{19,}&{0,}&{31,}&{37,}&{43,}&{0,}&{0,}&{61,}&{67,}&{73,}&{79,}&{0,}&{0,}&{97,}&{103,}&{109,}&{0,}& {0,}&{127} \\  
 \hline 
$(\mathcal A^5_{21} - \mathcal B_{21})  =$ &{0,}&{28,}&{28,}&{0,}&{28,}&{0,}&{28,}&{0,}&{0,}&{28,}&{0,}&{28,}&{28,}&{0,}&{0,}&{0,}&{28,}&{28,}&{0,}
                                               &{0,}&{0}.                                                        
\end{tabular}
\end{center}
\vspace{2pt}
Each nonzero term in segment $(\mathcal A^5_{21} - \mathcal B_{21}) $ indicates to one representation of number $\mathrm g_5^1 = 28$ as a difference of two primes.
Thus, $\pi_{28} (127) = \#^* ( (\mathcal A^5_{21} - \mathcal B_{21})  ) = 9$. 

Substitution $m' = 5$ and $m = 126/6 = 21$ in constructions (\ref{a-a}) and (\ref{b-a}) gives us the number of pairs of primes with difference equal to $30$ and $32$ respectively.

\subsubsection{Twin primes} \label{twinprimes} 

The twin primes are an important special case of pairs of primes with a fixed difference. 
We have the identity $ b_{i}-a_i = 2 $ for twin primes, and the construction 
$${\pi}_2 (n)  = \#^* \left( {\mathcal B}_{m}  -  {\mathcal A}_{m} \right) = \# \left( {\mathcal R}_{m}  -  {\mathcal L}_{m} \right)$$ 
for the number of pairs of twin primes $\pi_2 (n)$ not exceeding $n = 6m$.
Let $B - A = T$.
Since ${\mathcal A} = A {\setminus \lambda \mathcal P}$, ${\mathcal B} = B {\setminus \lambda \mathcal P}$ it follows
\tabcolsep=0.092em
\begin{center}
\begin{tabular}{rrrrrrrrrrrrrrrrrrrrrrrrrrr}
${{\mathcal B}}$ = &{7,}&{13,}&{19,}&{0,} &{31,}&{37,}&{43,}&{0,} &{0,} &{61,}&{67,}&{73,}&{79,}&{0,} &{0,} &{97,}&{103,}&{109,}&{0,}  &{0,}&{127, \ldots} \\ 
${{\mathcal A}}$ = &{5,}&{11,}&{17,}&{23,}&{29,}&{0,} &{41,}&{47,}&{53,}&{59,}&{0,} &{71,}&{0,} &{83,}&{89,}&{0,} &{101,}&{107,}&{113,}&{0,}&{0, \ldots}   \\ 
\hline
${{T} \setminus \lambda \mathcal P}$ = &{2,}&{2,}&{2,}&{0,}&{2,}&{0,}&{2,}&{0,}&{0,}&{2,}&{0,}&{2,}&{0,}&{0,}&{0,}&{0,}&{2,}&{2,} &{0,}  &{0,}&{0, \ldots}   
\end{tabular}
\end{center}
where each term equals to $2$ in sequence ${{T} \setminus \lambda \mathcal P}$ indicates at one pair of twin primes.
Now we consider very important sequence
$ {\mathcal T} = \left\{  t_i \colon {t_i} = i \text{  if  } {l_i}  {r_i} \ne 0 ; {t_i} = 0  \text{  if  } {l_i}  {r_i} = 0 \right\}$: 
\tabcolsep=0.094em
\begin{center}
\begin{tabular}{rrrrrrrrrrrrrrrrrrrrrrrrrrr}
${{\mathcal R}}= $ &{1,}&{2,}&{3,}&{0,}&{5,}&{6,}&{7,}&{0,}&{0,}&{10,}&{11,}&{12,}&{13,}&{0,}&{0,}&{16,}&{17,}&{18,}&{0,}&{0,}&{21,}&{0,}&{23,}&{0,}&{25 \ldots} \\ 
${{\mathcal L}} =$ &{1,}&{2,}&{3,}&{4,}&{5,}&{0,}&{7,}&{8,}&{9,}&{10,}&{0,}&{12,}&{0,}&{14,}&{15,}&{0,}&{17,}&{18,}&{19,}&{0,}&{0,}&{22,}&{23,}&{0,}&{25 \ldots} \\
\hline
${{\mathcal T}}= $ &{1,}&{2,}&{3,}&{0,}&{5,}&{0,}&{7,}&{0,}&{0,}&{10,}&{0,}&{12,}&{0,}&{0,}&{0,}&{0,}&{17,}&{18,}&{0,}&{0,}&{0,}&{0,}&{23,}&{0,}&{25 \ldots}  \\
\end{tabular}
\end{center}
If $t_i \ne 0$ then $6t_i \mp 1$ are both primes, i.e. form a pair of twin primes. 
It is easy to see that the sequence $\mathcal T$ inherits the structure of the sequence $T \setminus \lambda \mathcal P$ in the sense of distribution of zero terms.

\subsubsection{Representation of an even numbers as the sum of two primes } \label{reprofevennumber}

Denote by $G(\mathrm g ; p)$ the number of representations number $\mathrm g$ as a sum of two primes $p \in \mathcal P$.
All even numbers of each class may also be represented as a sum of two odd integers from $A$ and/or $B$ in the only way, 
$\mathrm g_{m+1}^1=a_i+a_{m-i+1}$, $\mathrm g_{m+1}^2=a_i+b_{m-i+1}$, and $\mathrm g_{m+1}^3=b_j+b_{m-j+1}$. 
These identities allow us to find the solution of this problem for all even numbers $\mathrm g \geqslant 10$ from the constructions
\begin{align}
G(\mathrm g^1_{m}; p )  & \geqslant 0.5 \cdot \#^* \left( {\mathcal A}_{m-1} + {\mathcal A'}_{m-1} \right) \label{a+a} \\
                         &= 0.5 \cdot \#^* \left( {\mathcal L}_{m-1} + {\mathcal L'}_{m-1} \right), \nonumber \\ 
G(\mathrm g^2_{m}; p )  & = 1.0 \cdot \#^* \left( {\mathcal A}_{m-1}+{\mathcal B'}_{m-1} \right) \label{a+b} \\
                         &= 1.0 \cdot \#^* \left({\mathcal L}_{m-1} +{\mathcal R'}_{m-1} \right), \nonumber \\ 
G(\mathrm g^3_{m}; p )  & \geqslant 0.5 \cdot \#^* \left( {\mathcal B}_{m-1} + {\mathcal B'}_{m-1} \right) \label{b+b} \\
		                     &= 0.5 \cdot \#^* \left( {\mathcal R}_{m-1} + {\mathcal R'}_{m-1} \right).  \nonumber   
\end{align}
\begin{remark} \label{rem_on_sum}
We take the coefficient $0.5$ in (\ref{a+a}) and (\ref{b+b}) because the sum of direct segment and reverse segment of the same class is the symmetrical segment where elements located symmetrically with respect to its center differ only by order of summation.
\end{remark}

For example, we will find the number of representations of even number $94$ as the sum of two primes.
We have $94 = \mathrm g_{16}^1 \in \mathcal G^1$, $m = 16$. Applying construction (\ref{a+a}) we obtain
\tabcolsep=0.32em
\begin{center}
\begin{tabular}{rrrrrrrrrrrrrrrrr}
  ${\mathcal A}_{15}  $  =  & {5,}  & {11,} & {17,} & {23,} & {29,} &{0,}  &{41,} &{47,} &{53,} &{59,} &{0,}  &{71,} &{0,}  &{83,}  &{89.}   \\
  ${\mathcal A'}_{15} $  =  & {89,} &{83,}  & {0,}  & {71,} & {0,}  &{59,} &{53,} &{47,} &{41,} &{0,}  &{29,} &{23,} &{17,} &{11,}  &{5.}   \\
\hline
  $(\mathcal A_{15} + \mathcal A'_{15} )$  = &{94,} &{94,} &{0,} &{94,}  &{0,}  &{0,}  &{94,} &{94,} &{94,}  &{0,} &{0,}  &{94,} &{0,}  &{94,} &{94.}					
\end{tabular}
\end{center}	
Here $0.5 \cdot \#^* (\mathcal A_{15} + \mathcal A'_{15} ))  = 4.5$ while $G(94; p) = 5 $. 
This result can be obtained by using last expression of (\ref{a+a}): 
\tabcolsep=0.38em
\begin{center}
\begin{tabular}{rrrrrrrrrrrrrrrrr}
     ${\mathcal L}_{15}  $ =  &{1,}  &{2,}  &{3,} &{4,}  &{5,} &{0,}  &{7,} &{8,}  &{9,}  &{10,} &{0,} &{12,} &{0,} &{14,} &{15.} \\
     ${\mathcal L'}_{15} $ =  &{15,} &{14,} &{0,} &{12,} &{0,} &{10,} &{9,} &{8,}  &{7,}  &{0,}  &{5,} &{4,}  &{3,} &{2,}  &{1.}   \\
\hline
  $({\mathcal L}_{15} + {\mathcal L'}_{15}) $  =&{16,} &{16,} &{0,} &{16,} &{0,} &{0,} &{16,} &{16,} &{16,} &{0,}  &{0,} &{16,} &{0,} &{16,} &{16.}
\end{tabular}
\end{center}
Constructions (\ref{a+b}) and (\ref{b+b}) with $m = 16$ give $G(96; p)$ and $G(98; p)$ respectively. 
\begin{remark}\label{rem_on_density}
If each of three consecutive even numbers $\mathrm g_{m}^1$, $\mathrm g_{m}^2$, and $\mathrm g_{m}^3$ may be represented as the sum of two primes then number $m$ has all three representations $m=l+l$, $m=l+r$, and $m=r+r$.
\end{remark}

\subsection{Double sieve}\label{double_sieving}  

In sections \ref{primes_and_composite}, \ref{sieve_method}, and \ref{sequences L and R} we showed that the sequences $A$ and $B$ are well-structured by all primes $p \in \mathcal{P}$.
Thus, for every $p \in \mathcal{P}$ in each of the sequences ${\mathcal A}$, ${\mathcal B}$, ${\mathcal L}$, and ${\mathcal R}$ there exists one and only one infinite subsequence $\{0\}_p$.
For every $p \in \mathcal{P}$ we have
\begin{align*}
\#^* (A_{m} {\setminus \lambda p}) = &\left| \left\{ a_i \colon i \leqslant m, \left( p, a_i \right) = 1  \right\}  \right| \sim m \left( 1 - 1/p \right), \\
\#^* (B_{m} {\setminus \lambda p}) = &\left| \left\{ b_i \colon i \leqslant m, \left( p, b_i \right) = 1  \right\}  \right| \sim m \left( 1 - 1/p \right).
\end{align*}
\begin{definition} 
We say that infinite sequence $S$ double sifted by prime $p$ if there exist two various infinite disjoint subsequences $\{0\}_p$.
\end{definition} 
\begin{definition} 
We say that sequence $S$ is a realization of double sieve if sequence $S$ double sifted over all $p \in \mathcal P$. 
\end{definition} 
\begin{definition} 
We say that sequence $S$ is a realization of double sieve if sequence $S$ double sifted over all but finitely many $p \in \mathcal P$ (general case). 
\end{definition} 

\begin{theorem} 
Sequences ${{T} \setminus \lambda \mathcal P}$ and $\mathcal T$ are realizations of double sieve.
\end{theorem}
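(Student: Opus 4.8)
The plan is to show that for every $p\in\mathcal P$ the sequence ${T}\setminus\lambda\mathcal P$ carries two disjoint \emph{infinite} arithmetic progressions of indices along which it vanishes, both with common difference exactly $p$; this is precisely the definition of being double sifted by $p$, and doing it for all $p\in\mathcal P$ makes ${T}\setminus\lambda\mathcal P$ a double sieve realization. Since $\mathcal T$ was observed to inherit the distribution of zero terms of ${T}\setminus\lambda\mathcal P$, the conclusion for $\mathcal T$ will then be automatic.

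First I would record the elementary but crucial remark that ${T}\setminus\lambda\mathcal P=\mathcal B-\mathcal A$, so by the definition of subtraction of sequences it has a zero at index $i$ exactly when $\mathcal A$ or $\mathcal B$ has a zero at $i$ (equivalently $t_i=0$ in $\mathcal T$ iff $l_ir_i=0$). Thus the zero set of ${T}\setminus\lambda\mathcal P$ is the union of the zero set of $\mathcal A$ and the zero set of $\mathcal B$, and it suffices to extract one progression of zeros from each of $\mathcal A$ and $\mathcal B$ for each $p$. Next, fix $p\in\mathcal P$ and split according to its residue mod $6$. If $p\equiv5\pmod 6$, write $p=a_j$ with $j=(p+1)/6$, so $1\le j<p$. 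By (\ref{a_a_composite}) the indices $i=j+ka_j$, $k\ge1$, give $a_i=a_j(6k+1)$, a proper multiple of $p$, hence $\mathcal A$ vanishes on the progression $i\equiv j\pmod p$ (from its first member $j+p$ onward). By (\ref{b_a_composite}) the indices $i=j'a_j-j$, $j'\ge1$, give $b_i=a_j(6j'-1)$, again a proper multiple of $p$, so $\mathcal B$ vanishes on the progression $i\equiv-j\pmod p$. Both progressions have common difference $p$, and they are disjoint, since $j\equiv-j\pmod p$ would force $p\mid 2j$, impossible for an odd prime $p>3$ with $0<j<p$. Hence ${T}\setminus\lambda\mathcal P$ is double sifted by such $p$. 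The case $p\equiv1\pmod 6$ is symmetric: writing $p=b_k$ with $k=(p-1)/6$, $1\le k<p$, relation (\ref{a_b_composite}) yields zeros of $\mathcal A$ on $i\equiv-k\pmod p$ and relation (\ref{b_b_composite}) yields zeros of $\mathcal B$ on $i\equiv k\pmod p$, disjoint for the same arithmetic reason.

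Since every $p\in\mathcal P$ falls into one of these two classes, ${T}\setminus\lambda\mathcal P$ is double sifted by all $p\in\mathcal P$, hence a double sieve realization; and because $\mathcal T$ has the same distribution of zero terms, the same holds for $\mathcal T$. The only step requiring genuine care is the disjointness of the two progressions — that the ``$A$-progression'' and the ``$B$-progression'' lie in different residue classes mod $p$ — which is exactly where one uses that $p$ is an odd prime exceeding $3$ together with the bound $0<j<p$ (resp. $0<k<p$) on the index defining $p$. One must also be slightly careful that the singled-out terms are \emph{proper} multiples of $p$, so that they are composite and genuinely sifted to $0$ rather than being the prime $p$ itself; this is ensured by taking the progression parameters $k\ge1$, $j'\ge1$ (resp. $j\ge1$, $k'\ge1$) strictly positive, so that $a_i$ or $b_i$ is $p$ times a factor $\ge5$.
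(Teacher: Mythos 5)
Your proposal is correct and follows essentially the same route as the paper: the zero set of ${T}\setminus\lambda\mathcal P$ is the union of the zero sets of $\mathcal A$ and $\mathcal B$, and for each $p\in\mathcal P$ the four divisibility relations supply one progression of zero indices modulo $p$ from $\mathcal A$ and one from $\mathcal B$, which the paper packages as $i=\lambda a_j\pm j$ and $i=\lambda b_k\pm k$. You are in fact more careful than the paper at the one point that needs care: the paper simply asserts the two progressions are disjoint, whereas you verify that $j\not\equiv -j\pmod{p}$ using $0<j<p$ and $p$ odd, and you also note that the sifted terms must be proper multiples of $p$ (parameters $\geqslant 1$) so that $p$ itself is not counted as a zero.
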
 

\begin{proof}
Sequences ${{T} \setminus \lambda \mathcal P}$ and $\mathcal T$ have the same structure in the sense of distribution of zero terms.
We consider the structure of sequence ${{T} \setminus \lambda \mathcal P}$:
\tabcolsep=0.092em
\begin{center}
\begin{tabular}{rrrrrrrrrrrrrrrrrrrrrrrrrrr}
${{\mathcal B}}$ = &{7,}&{13,}&{19,}&{0,} &{31,}&{37,}&{43,}&{0,} &{0,} &{61,}&{67,}&{73,}&{79,}&{0,} &{0,} &{97,}&{103,}&{109,}&{0,}  &{0,}&{127, \ldots} \\ 
${{\mathcal A}}$ = &{5,}&{11,}&{17,}&{23,}&{29,}&{0,} &{41,}&{47,}&{53,}&{59,}&{0,} &{71,}&{0,} &{83,}&{89,}&{0,} &{101,}&{107,}&{113,}&{0,}&{0, \ldots}   \\ 
\hline
${{T} \setminus \lambda \mathcal P}$ = &{2,}&{2,}&{2,}&{0,}&{2,}&{0,}&{2,}&{0,}&{0,}&{2,}&{0,}&{2,}&{0,}&{0,}&{0,}&{0,}&{2,}&{2,}&{0,}&{0,}&{0, \ldots}   
\end{tabular}
\end{center}
Elements of both sequences ${\mathcal A}$ and ${\mathcal B}$ labelled as $0$ mapped on ${{T} \setminus \lambda \mathcal P}$ as $0$ too.
Therefore the indices of zero terms of sequence ${{T} \setminus \lambda \mathcal P}$ will be determined by all expressions (\ref{a_a_composite}), (\ref{a_b_composite}), (\ref{b_a_composite}), and (\ref{b_b_composite}). 
Combining (\ref{a_a_composite}) with (\ref{b_a_composite}) and (\ref{a_b_composite}) with (\ref{b_b_composite}), we get two families of arithmetic progressions, $\lambda \in  \textbf{Z}^+$,
\begin{align} 
	 i =   \lambda a_j \pm j , \label{a_a_composite_2} \\ 
	 i =   \lambda b_k \pm k . \label{b_b_composite_2}  
	\end{align}
Equation (\ref{a_a_composite_2}) asserts that in sequence ${{T} \setminus \lambda \mathcal P}$ for each $p \in A$ there are two infinite disjoint subsequences of zero terms, the indices of which form arithmetic progressions with the same common difference $p \in A$.
Equation (\ref{b_b_composite_2}) asserts that in sequence ${{T} \setminus \lambda \mathcal P}$ for each $p \in B$ there are two infinite disjoint subsequences of zero terms, the indices of which form arithmetic progressions with the same common difference $p \in B$.
The proof follows from the statement $A \cup B \supset  \mathcal P$.
\end{proof}

Thus, the sum of the sequences $\mathcal A$ and $\mathcal B$, and also the sum of the sequences ${\mathcal L}$ and ${\mathcal R}$ are realizations of double sieve.
For all $p \in \mathcal P$ we have
\begin{align*}
\#^* ({T_m \setminus \lambda p}) \approx m \left( 1 - 2/p \right).	 
\end{align*}

\subsubsection{General case of double sieve}\label{general_sieving}

In a general case, that is, when we sum sequences derived from $\mathcal A$ and $\mathcal B$ or ${\mathcal L}$ and ${\mathcal R}$, 
we can get a sequence where two infinite subsequences of zero terms, which are arithmetic progressions with one and the same common difference may coincide 
and to degenerate into one subsequence. 
All the binary constructions that were considered in \ref{primes_with_fixed_gap}, \ref{twinprimes}, and \ref{reprofevennumber}
have a general view $S'_{m} \pm S''_{m}={S}_{m} = \{ \mathrm g \}_{i=1}^{i=m}$ and are generated by the identities $s'_i \pm s''_i = s_i =\mathrm g$. 
If $p \mid  \mathrm g$, then $p \mid s'_i$ if and only if $p \mid s''_i$. 
In this case both elements $s'_i$ Рё $s''_i$ maps to single element $s_i$.
Let $p, q \in \mathcal{P}$ and let $p \mid  \mathrm g$ and $q \nmid  \mathrm g$; then
\begin{align}
\#^* ({S_m \setminus \lambda p}) \approx  m \left( 1 - 1/p \right), \label{one_sieving} \\ 
\#^* ({S_m \setminus \lambda q}) \approx  m \left( 1 - 2/q \right). \label{two_sieving} 
	\end{align}
	
We give two examples to illustrate these assertions. 
Let $S\{ 28 \} = A^5 - B $. 
The difference between corresponding elements of these sequences is equal to $\mathrm g=28$. 
The even number $\mathrm g=28$ have the only one prime factor $p=7$, $p \in \mathcal{P}$.
Sieving this construction by a prime $p=7$, we get
\tabcolsep=0.08em
\begin{center}
\begin{tabular}{rrrrrrrrrrrrrrrrrrrrrrrrrrr}
${A}^5 \setminus \lambda 7 =$ &{0,}&{41,}&{47,}&{53,}&{59,}&{65,} &{71,}&{0,}&{83,}&{89,}&{95,}&{101,}&{107,}&{113,}&{0,}&{125,}&{131,}&{137,}&{143, \ldots} \\ 
${B}\setminus \lambda 7 =$  &{7,}&{13,}&{19,}&{25,}&{31,}&{37,}&{43,}&{0,}&{55,}&{61,}&{67,}&{73,}&{79,}&{85,}&{0,}&{97,}&{103,}&{109,}&{115, \ldots}  \\ 
\hline 
${S \{ 28 \}} \setminus \lambda 7 =$ &{0,}&{28,}&{28,}&{28,}&{28,}&{28,}&{28,}&{0,}&{28,}&{28,}&{28,}&{28,}&{28,}&{28,}&{0,}&{28,}&{28,}&{28,}&{28, \ldots}   
\end{tabular}
\end{center}
where in compliance with (\ref{one_sieving}) from sequence $S\{ 28 \} \setminus \lambda 7 $ sieved out one from each seven terms. 
Sieving this construction by a prime $p=5$, $5 \nmid 28$ gives us
\tabcolsep=0.078em
\begin{center}
\begin{tabular}{rrrrrrrrrrrrrrrrrrrr}
${A}^5 \setminus \lambda 5 =$ &{0,}&{41,}&{47,}&{53,}&{59,}&{0,}&{71,}&{77,}&{83,}&{89,}&{0,}&{101,}&{107,}&{113,}&{119,}&{0,}&{131,}&{137,}&{143, \ldots} \\ 
${B} \setminus \lambda 5 =$ & {7,}&{13,}&{19,}&{0,}&{31,}&{37,}&{43,}&{49,}&{0,}&{61,}&{67,}&{73,}&{79,}&{0,}&{91,}&{97,}&{103,}&{109,}&{0, \ldots} \\ 
\hline 
${S\{ 28 \}} \setminus \lambda 5=$ &{0,}&{28,}&{28,}&{0,}&{28,}&{0,}&{28,}&{28,}&{0,}&{28,}&{0,}&{28,}&{28,}&{0,}&{28,}&{0,}&{28,}&{28,}&{0, \ldots}   
\end{tabular}
\end{center} 
where in compliance with (\ref{two_sieving}) from sequence ${S\{ 28 \}} \setminus \lambda 5$ sieved out two from each five terms. 
Such a result we get for any $p \nmid 28$.

Construction for double sieving can be described roughly as 
  \begin{align} \label{twinprime}
   {\pi}_{2} (n)  \sim  m  \prod \limits_{5 \leqslant p \leqslant n} \left( {1 - \frac{2}{p}} \right), 
 \end{align}
where $m=n/6$. 
For all $\mathrm g = 2^{k}$ we have
\begin{align*} 
   {\pi}_{\mathrm g} (n)  \sim  {\pi}_{2} (n) 
	                   \prod \limits_{ n \leqslant p \leqslant n + \mathrm g } \left( {1 - \frac{1}{p}} \right) 
										 \leqslant {\pi}_{2} (n), 
 \end{align*}
as $n \to \infty$. 
For any even $\mathrm g$ we will have 
\begin{align*} 
   {\pi}_{\mathrm g} (n)  \sim \kappa \, \eta_2 (\mathrm g) \, {\pi}_{2} (n) 
	                   \prod \limits_{ n \leqslant p \leqslant n + \mathrm g } \left( {1 - \frac{1}{p}} \right),
										 \end{align*}
where
\begin{align*}  
   \eta_2 (\mathrm g) =  \prod_{\substack { p \mid \mathrm g}, p \in  \mathcal P}   \left(\frac{p - 1} {p-2} \right),
\end{align*}
and $\kappa = 1$ if $3 \nmid {\mathrm g}$ and $\kappa = 2$ if $3 \mid{\mathrm g}$ in compliance with the remark \ref{rem_on_diff} in section \ref{primes_with_fixed_gap}.

\section{Additive problems }

\subsection{Twin prime conjecture}\label{twinprimeconj}

\begin{proposition} \label{prop_1}
There exists the function $H_m$ such that: 
\begin{enumerate}
	\item  ${\pi}_2 (6m)>mH(m)$ for all sufficiently large $m$;
	\item $mH(m) \to \infty $ as $m \to~\infty$.
\end{enumerate}
\end{proposition}
\begin{proof}
Combining the inequalities (Rosser, Schoenfeld \cite{Rosser-Schoenfeld}) 
$$ \pi (x) > \frac {x} {\log x} \left( 1 + \frac{1} {2 \log x} \right), \  x \geqslant 59 $$  
and 
$$ \frac{{e^{ - \gamma } }} {{\log x}} \left( 1 + \frac{1} {2 (\log x)^2} \right) > \prod\limits_{p \leqslant x} {\left( {1 - \frac{1} {p}} \right)}, \ x > 1,$$
we obtain 
\begin{align*} 
  \pi (x) > x {e^{ \gamma }} \prod\limits_{ p \leqslant x} {\left( {1 - \frac{1} {p}} \right)}.  
\end{align*} 
This inequality holds for all sufficiently large $x$. Then it is easy to check that 
\begin{align*} 
n {e^{ \gamma }} \prod\limits_{ p \leqslant n} {\left( {1 - \frac{1} {p}} \right)} <  \pi (a,n) + \pi (b,n), \ n \geqslant 31;  
 \end{align*} 
\begin{align} \label{single_sieve1}
  m {e^{ \gamma }} \prod\limits_{5 \leqslant p \leqslant 6m} {\left( {1 - \frac{1} {p}} \right)}  < \pi (a, 6m) ,\ m \geqslant 2; 
\end{align} %
\begin{align} \label{single_sieve2}
  m {e^{ \gamma }} \prod\limits_{5 \leqslant p \leqslant 6m} {\left( {1 - \frac{1} {p}} \right)}  < \pi (b, 6m),\ m \geqslant 9. 
\end{align} %
Assume that there exist functions $m \varphi_{a,m}   = \pi (a,6m)$ and $ m \varphi_{b,m}   =  \pi (b,6m).$
Then from inequalities (\ref{single_sieve1}) and (\ref{single_sieve2}) imply that for all sufficiently large $m$
\begin{equation*} 
{e^{ \gamma }} \prod\limits_{5 \leqslant p \leqslant 6m} {\left( {1 - \frac{1} {p}} \right)} <  \frac{\pi (a,6m)}{m} = \varphi_{a,m} , \   
{e^{ \gamma }} \prod\limits_{5 \leqslant p \leqslant 6m} {\left( {1 - \frac{1} {p}} \right)} <  \frac{\pi (b,6m)}{m} =\varphi_{b,m}.  
\end{equation*}	
By the Dirichlet's theorem on primes in arithmetic progressions we get $\varphi_{a,m} \sim  \varphi_{b,m} $ as $m \to \infty$.
To proceed to double sieve we use relation 
\begin{align} \label{repeated_twin}
{C_{\left\langle 1,2 \right\rangle} (m)} &= {  \prod \limits_{5 \leqslant p \leqslant 6m}} \left( {1 - \frac{2}{p}} \right) \Big{/}
{  \prod \limits_{5 \leqslant p \leqslant 6m}} \left( {1 - \frac{1}{p}} \right)^2  \\
	&= { \prod \limits_{5 \leqslant p \leqslant 6m}} \frac{p(p-2)}{(p-1)^2} \nonumber \\ \nonumber 
	&= \frac {4} {3} { \prod \limits_{3 \leqslant p \leqslant 6m}} \frac{p(p-2)}{(p-1)^2}.	 \nonumber 
	\end{align}

Then from (\ref{repeated_twin}) and considering the estimates (\ref{single_sieve1}) and (\ref{single_sieve2}), we get 
\begin{align}\label{H2}
H(m) &= { {e^{2 \gamma }} \prod \limits_{5 \leqslant p \leqslant 6m}} \left( {1 - \frac{2}{p}} \right)  \nonumber\\
     &= {  {C_{\left\langle 1,2 \right\rangle} (m) }  {e^{2 \gamma }}  \prod \limits_{5 \leqslant p \leqslant 6m}} \left( {1 - \frac{1}{p}} \right)^2.
\end{align} 
The inequality ${\pi}_2 (6m) > m H_m$ holds for all $m>5$ and satisfy to the first statement of Proposition~\ref{prop_1}. 
Proof of the second statement of this Proposition can be obtained from the asymptotic equality (Rosser, Schoenfeld, \cite{Rosser-Schoenfeld})
\begin{align*}
   \prod \limits_{\alpha < p \leqslant x} \left( 1 - \frac {\alpha} {p} \right)  \cong \frac {c(\alpha)} {(\log x)^\alpha},
\end{align*}
where $\alpha$ is a real constant, and known limit
\begin{align*}
\lim_{x \to + \infty} \frac {x^{\beta}}{(\log{x})^{\alpha}}  = \infty
\end{align*}
which exists for all ${\alpha} >0$ and ${\beta} >0$.
\end{proof}

\subsubsection{The first Hardy-Littlewood conjecture} \label{first_HL_conj}

We may get the approximation of the number of twin primes $\pi'_2(6m)$ through the number of primes in the following way: 
since $m \varphi_{a,m}= \pi (a,6m)$ and $m \varphi_{b,m}= \pi (b,6m)$ 
we can rewrite the equation (\ref{H2}) as 
\begin{align} \label{myfirstHLconj}
\pi'_2(6m) = C_{\left\langle 1,2 \right\rangle}  (m) \cdot m  \frac {\pi (a,6m) } {m} \cdot \frac { \pi (b,6m)} {m}.    
\end{align}
Having substituted $m=n/6$, ${\pi (a, 6m) \cdot \pi (b, 6m)} \sim [\pi(n) /2 ]^2 $, and $C_{\left\langle 1,2 \right\rangle} = 4C_2 /3$ (where $C_2=0.6601\ldots$ is the twin prime constant) in (\ref{myfirstHLconj}), we obtain 
\begin{align*} 
  \pi'_2(6m) = 
          2 n C_2 \left[ \frac {\pi(n)} {n} \right]^2 
          \sim  2 C_2  \frac {n} {(\log n)^2}. 
  \end{align*}
Therefore, (\ref{myfirstHLconj}) is equivalent to the first Hardy-Littlewood conjecture.  
This is a better approximation of $\pi_2(6m)$ than $mH_m$. Thus, ${\pi}_2 (10^6) - \pi'_2(10^6) = 32.5356\ldots$  
It is known that $\pi_2(n)$ increases much like the function 
\begin{align*} 
2 C_2 \int^n_2  \frac {dt} {(\log t)^2}.
\end{align*}
It expresses $\pi_2(n)$ through the density of primes.  
The density of prime numbers in summands segments here decreases from $1/ \log 2$ to $1/ \log n$. 
Some transformation of it allow us to write the expression for the number of pairs of primes not exceeding $n + \mathrm g$ which have the difference ${\mathrm g}$:
\begin{align}  \label{firstHLconj}
  \frac {{\pi}_{{\mathrm g}}(n)}{\kappa \eta_2 (\mathrm g)} 
		\sim 2 C_2 \int^n_2  \frac {dt} {\log t \log (t + \mathrm g) }.
\end{align}
Here we sum two segments where the density of prime numbers decreases from $1/ \log 2$ to $1/ \log n$ in one segment 
and decreases from $1/ \log (2 + \mathrm g)$ to $1/ \log (n + \mathrm g)$ in the other.
We are not trying to find a good approximation for some quantities, but to reveal the asymptotic behavior of certain functions.

\subsection{The Goldbach-Euler conjecture}\label{Eul_Gol_conj}

\begin{proposition} \label{Eul_Gol_prop}
There exists the function $H'_m$ such that: 
\begin{enumerate}
	\item for all sufficiently large $m$ the number of representations as a sum of two primes each of the three successive even numbers $\mathrm g_m^1$, $\mathrm g_m^2$, and $\mathrm g_m^3$ are no less than $mH'_m$;
	\item $mH'_m \to \infty $ as $m \to \infty $.
\end{enumerate}
\end{proposition}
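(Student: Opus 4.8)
The plan is to re-run the argument of the twin prime proposition of Section~\ref{twinprimeconj}, but applied in parallel to the three binary constructions (\ref{a+a}), (\ref{a+b}), (\ref{b+b}) for $\pi^+(\mathrm g_m^1)$, $\pi^+(\mathrm g_m^2)$, $\pi^+(\mathrm g_m^3)$, and then to take $H'_m$ to be a fixed fraction of $H_m$. The first observation is the arithmetic bookkeeping: among three consecutive even numbers $\mathrm g_m^1=6m-2$, $\mathrm g_m^2=6m$, $\mathrm g_m^3=6m+2$ exactly the middle one is divisible by $3$, so by Remark~\ref{rem_on_sum} and (\ref{first_HLC}) the coefficient attached to $\mathrm g_m^2$ (via construction (\ref{a+b}), which carries no factor $1/2$) is $\kappa=1$, while $\kappa=1/2$ for $\mathrm g_m^1$ and $\mathrm g_m^3$ (via (\ref{a+a}) and (\ref{b+b}), which do carry it). In every case $\kappa\geqslant 1/2$.

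Next I would invoke the general double-sieve bookkeeping of Section~\ref{general_sieving}. For each $j$ let $S$ denote the relevant segment sum --- $\mathcal A_{m-1}+\mathcal A'_{m-1}$, $\mathcal A_{m-1}+\mathcal B'_{m-1}$, or $\mathcal B_{m-1}+\mathcal B'_{m-1}$ --- which by Section~\ref{general_sieving} realizes the double sieve in the generalized sense, so a prime $p\in\mathcal P$ with $p\nmid\mathrm g_m^j$ removes a proportion $\approx 2/p$ of the terms by (\ref{two_sieving}), whereas a prime $p\mid\mathrm g_m^j$ removes only $\approx 1/p$ by (\ref{one_sieving}) --- the two zero-progressions coinciding precisely because $p\mid\mathrm g_m^j$ forces $p\mid s'_i\Leftrightarrow p\mid s''_i$. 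Compounding these per-prime counts over $5\leqslant p\leqslant 6m$ and applying exactly the calibration used for $H_m$ (Mertens' formula, the single-to-double sieve identity $\prod(1-2/p)\sim C_{1:2}\prod(1-1/p)^2$, and the Rosser--Schoenfeld bound $\pi(n)>n/\log n$, which forces $\pi(a,6m),\pi(b,6m)>m\,e^{\gamma}\prod_{5\leqslant p\leqslant 6m}(1-1/p)$) turns the right-hand sides of (\ref{a+a})--(\ref{b+b}) into
\[
\pi^+(\mathrm g_m^2)\;\gtrsim\;(m-1)\,H_m\prod_{p \mid \mathrm g_m^2,\, p \in \mathcal P}\frac{p-1}{p-2}\;\geqslant\;(m-1)\,H_m,\qquad \pi^+(\mathrm g_m^j)\;\gtrsim\;\tfrac12\,(m-1)\,H_m\quad(j=1,3),
\]
the inequalities using $(p-1)/(p-2)>1$ and $\kappa\geqslant 1/2$.

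Since the twin prime proposition already supplies $mH_m\to\infty$, the choice $H'_m=\tfrac12 H_m$ makes $mH'_m\to\infty$ automatically, and all three bounds above read $\pi^+(\mathrm g_m^j)\geqslant mH'_m$ for sufficiently large $m$ --- up to the same mild numerical slack that the inequality $\pi_2(6m)>mH_m$ itself carries and which is to be verified for small $m$ exactly as there. Remark~\ref{rem_on_density} then records the extra structure this buys: whenever all three of $\mathrm g_m^1,\mathrm g_m^2,\mathrm g_m^3$ are sums of two primes, the index $m$ itself admits the three representations $l+l$, $l+r$, $r+r$.

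The step I expect to be the genuine obstacle is precisely the one the twin prime proposition glosses over: upgrading the per-prime double-sieve counts to an honest lower bound on $\#\left(S\setminus\lambda\mathcal P\right)$ valid for every large $m$ and all three classes simultaneously. The double sieve fixes each prime's deletion pattern exactly, but bounding from below the joint effect of the $O(\log m)$ sieving primes acting on a length-$(m-1)$ segment is where bookkeeping alone stops working --- this is the parity-type difficulty intrinsic to sieve lower bounds. I would isolate it as a lemma; lacking a rigorous version I would fall back, as the text does for $H_m$, on the asymptotics $\pi^+(\mathrm g_m^j)\sim\eta_2(\mathrm g_m^j)\,\pi_2(6m)$ for large $m$ together with a numerical check for small $m$, noting that $\eta_2(\mathrm g)\geqslant 1/2$ is exactly what makes $H'_m=\tfrac12 H_m$ a safe choice.
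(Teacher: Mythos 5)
Your overall strategy --- transfer the twin-prime function $H_m$ to the three constructions (\ref{a+a})--(\ref{b+b}) with a class-dependent factor $\kappa\geqslant 1/2$ --- is the same family of argument as the paper's, but it omits the one ingredient that the paper actually adds for this proposition. In the twin-prime calibration behind $H_m$, both summand segments have the same density profile (decreasing from $1/\log 2$ to $1/\log n$), so the count is governed by $\int_2^n dt/(\log t)^2$ as in (\ref{firstHLconj}). In the Goldbach constructions the segment $\mathcal A'_{m-1}$ (resp.\ $\mathcal B'_{m-1}$) is \emph{inverted}: position $i$ pairs a prime of size about $6i$ with one of size about $6(m-i)$, so the relevant comparison integral is $\int_2^{n-2} dt/(\log t\,\log(n-t))$ as in (\ref{Eul_Gol_count}), which is \emph{smaller} than $\int_2^{n-2}dt/(\log t)^2$ by the factor $\mu_2(n)$, dipping to $0.706\ldots$ at $n=32$ and still only $0.972\ldots$ at $n=10^5$. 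The paper's proof consists precisely in introducing this quotient and setting $H'_m=\mu_2(6m)H_m$ (with $\kappa$ absorbed into $\eta_2$), giving $\pi^+(\mathrm g)\approx \pi_2(\mathrm g)\,\mu_2(\mathrm g)\,\eta_2(\mathrm g)$. Your pivotal display, $\pi^+(\mathrm g_m^j)\gtrsim\kappa\,(m-1)H_m$, silently assumes the twin-prime density profile carries over unchanged, and your fallback asymptotic $\pi^+(\mathrm g_m^j)\sim\eta_2(\mathrm g_m^j)\,\pi_2(6m)$ likewise drops $\mu_2$; while $\mu_2(n)\to 1$, for finite $m$ and for the worst class members with $\eta_2(\mathrm g)=1/2$ exactly (e.g.\ $\mathrm g$ a power of $2$) the predicted count exceeds your $mH'_m=\tfrac12 mH_m$ only through second-order terms you never examine, so the stated inequality for ``all sufficiently large $m$'' is not supported by the argument as written.

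On the positive side, your bookkeeping of $\kappa$ via Remark \ref{rem_on_sum} and of the one-versus-two progression dichotomy (\ref{one_sieving})--(\ref{two_sieving}) matches the paper, and your closing observation --- that converting per-prime deletion counts into an honest lower bound for $\#\left(S\setminus\lambda\mathcal P\right)$ is a parity-type obstruction that mere structure of the double sieve cannot overcome --- is correct and in fact applies equally to the paper's own treatment of both $H_m$ and $H'_m$; the paper papers over it with the asymptotic product formulas and numerical checks, exactly the fallback you describe. So the gap relative to the paper is not the sieve-theoretic honesty issue (which is shared) but the missing density-profile correction $\mu_2$ that defines $H'_m$ in the first place.
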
%

\begin{proof}
The construction for $G(\mathrm g ; p)$ is sum of two segments, where 
the density of prime numbers decreases $1/ \log 2$ to $1/ \log n$ in one segment and increases from $1/ \log n$ to $1/ \log 2$ in the other segment.
To estimate the number of representations of an even number ${\mathrm g}$ as the sum of two primes we suppose that $n= \mathrm g$ and make a suitable choice of $\eta_2 (\mathrm g)$ and $\kappa$, by analogy with (\ref{firstHLconj}), we have   
\begin{align*} 
  \frac {G(\mathrm g ; p)} {\kappa \eta_2 (\mathrm g)}  \sim 2 C_2 \int^{n-2}_2  \frac {dt} {\log t \log(n-t)}.
\end{align*}
We may to take an estimate of $G(\mathrm g ; p)$ from ${\pi}_{{\mathrm g}}(n)$ through the quotient: 
\begin{align*} 
  \mu_2 (n) &=   {\sum^{n-2}_2  \frac {1} {\log t \log(n-t)}} \Big{/}  {\sum^{n-2}_2  \frac {1} {( \log t)^2}} \\
  &\approx    {\int^{n-2}_2  \frac {dt} {\log t \log(n-t)}}  \Big{/} {\int^{n-2}_2  \frac {dt} {( \log t)^2}}. 
\end{align*}
The quotient $\mu_2 (n)$ has its minimum $0.706\ldots$ at $n=32$ and attains the value $0.972\ldots$ at $n=10^5$. 
Thus, function $H'_m = \mu_2 (6m) H_m$ satisfy Proposition \ref{Eul_Gol_prop}. 
It is easy to see that  
$G(\mathrm g ; p)  \sim   \mu_2 (\mathrm g)  \kappa \eta_2 (\mathrm g)  {\pi}_2 {(\mathrm g)} \to~\infty$ as $\mathrm g \to~\infty $. 
\end{proof}

\section{Ordered set of prime numbers (tuples) } \label{tuple}

Let $(p_1, p_2, \ldots p_k)$ be the set of primes. Such a set of primes (tuple) is determined by the pattern $\omega_k =(g_1=0,g_2, \ldots g_k)$ where $g_i = p_i - p_1$. 
Three cases are possible:
\begin{enumerate}
	\item $g_2,g_3, \ldots g_k \equiv 0,2 \pmod 6 \Rightarrow g_1 \in A$;
	\item $g_2,g_3, \ldots g_k \equiv 0,4 \pmod 6 \Rightarrow g_1 \in B$;
	\item $g_2 \equiv g_3 \equiv \ldots  \equiv  g_k \equiv 0 \pmod 6 \Rightarrow g_1,g_2,g_3,\ldots g_k  \in A$ or $g_1,g_2,g_3,\ldots g_k  \in B$.
\end{enumerate}
On the other hand, the pattern $ \omega_k = (g_1, g_2, \ldots g_k)$ can satisfy other sets (tuples) of prime numbers $(p_r, p_r+g_2, \ldots p_r+g_k)$, where $\{p_r,r=1, 2,\ldots\}$ is subset of primes.
To find tuples corresponding to pattern $\omega_k$ it is necessary to sum $k$ sequences of the type ${\mathcal A}^{m'}$ or ${\mathcal B}^{m'}$.
These sequences must correspond to elements of pattern $\omega_k = (g_1=0,g_2,g_3,\ldots g_k)$: 
\begin{itemize}
	\item if $p_1 \in A$, then $m' = g/6$ for $p \in A$ and $m' = (g-2)/6$ for $p \in B$; 
	\item if $p_1 \in B$, then $m' = (g+2)/6$ for $p \in A$ and $m' = g/6$ for $p \in B$.
\end{itemize}
The sum of $k$ sequences contains $k - \nu_p$ infinite subsequences $\{0\}_p$ for every $p \in \mathcal P$.
The value of $\nu_p$ determined by the set $g_1,g_2, \ldots g_k$.
Let 
\begin{align*} 
   g_{11} \equiv g_{12} &\equiv \ldots \equiv g_{1j_1} \!\!\!\!\!\! \pmod{p}; \, \\
	 g_{21} \equiv g_{22} &\equiv \ldots \equiv g_{2j_2} \!\!\!\!\!\! \pmod{p}; \,  \\
	 \ldots  \, \\
	 g_{i1} \equiv g_{i2} &\equiv \ldots \equiv g_{ij_i} \!\!\!\!\!\! \pmod{p}, 
\end{align*}
and all $g_{11} , g_{21}  \ldots  g_{i1}$ are pairwise incongruent modulo $p$. Then $\nu_p =  -i + \sum_i {j_i}$. 
Construction for the number of $k$-tuples for which $p_r \leqslant n$ can be described roughly as follows: 
\begin{align*}   
  {\pi} (\omega_k, n) &\sim m \prod \limits_{p \geqslant 5}  \left( {1- \frac{k - \nu_p}{p}} \right)  \nonumber \\
	&= m \prod \limits_{5 \leqslant p \leqslant k }  \left( {1- \frac{k - \nu_p}{p}} \right)  \prod \limits_{p > k }  \left( {1- \frac{k - \nu_p}{p}} \right) \nonumber \\
	&= m \prod \limits_{5 \leqslant p \leqslant k }  \left( {1- \frac{k - \nu_p}{p}} \right)  \prod\limits_{{p > k }} \left( {1 + \frac{\nu_p}{p -k}} \right) 
	          \prod\limits_{{p > k }}   \left( {1- \frac{k}{p}} \right).  \nonumber 
	\end{align*}
Necessary condition for the existence of $k$-tuples is the inequality $k - \nu_p < p $ for every $p \in \mathcal P$.
In this case $C(\omega_k) = \prod_{5 \leqslant p \leqslant k }  \left( {1- ({k - \nu_p}) / {p}} \right)  >0$ and $\eta (\omega_k) =\prod_{{p > k }} \left( {1 + {\nu_p}/({p -k})} \right) \geqslant 1$.
To proceed to $k$-fold sieve we use relation 
\begin{align*} 
{C_{\left\langle 1,k \right\rangle} {(m)}} = {  \prod \limits_{k+1 \leqslant p \leqslant 6m}} \left( {1 - \frac{k}{p}} \right) \Big{/}
{  \prod \limits_{k+1 \leqslant p \leqslant 6m}} \left( {1 - \frac{1}{p}} \right)^k  
	\end{align*}
or 
\begin{align*}
{ {e^{k \gamma }} \prod \limits_{k+1 \leqslant p \leqslant 6m}} \left( {1 - \frac{k}{p}} \right)  
     = {  {C_{\left\langle 1,k \right\rangle} {(m)} }  {e^{k \gamma }}  \prod \limits_{k+1 \leqslant p \leqslant 6m}} \left( {1 - \frac{1}{p}} \right)^k.
\end{align*} 	
By analogy with (\ref{H2}) and in view of the estimate  (\ref{single_sieve1}) and (\ref{single_sieve2}), we get 
\begin{align} \label{k-fold sieve 2}
 {\pi} (\omega_k, n)  \sim m C(\omega_k ) \kappa \eta (\omega_k ) 
      {  {C_{\left\langle 1,k \right\rangle} {(m)} }  {e^{k \gamma }}  \prod \limits_{k+1 \leqslant p \leqslant 6m}} \left( {1 - \frac{1}{p}} \right)^k.
\end{align} 
If every $g_i \equiv 0 \pmod 6$, then $\kappa =2$. 
In all other cases $\kappa =1$. 
Since 
$$\lim_{m \to + \infty} m \prod \limits_{k+1 \leqslant p \leqslant 6m} \left( {1- \frac{k}{p}} \right) =
 \lim_{m \to + \infty} m {C_{\left\langle 1,k \right\rangle} {(m)}} {  \prod \limits_{k+1 \leqslant p \leqslant 6m}} \left( {1 - \frac{1}{p}} \right)^k = \infty,$$
and other factor in (\ref{k-fold sieve 2}) have a finite value, then $\lim_{n \to + \infty} \pi ({\omega_k}, n) = \infty$. 
That is if $k$-tuple is admissible then there exist infinitely many such $k$-tuples.

\subsection{Patterns of two pairs of twin primes} \label{patterns}

Let ${\Pi}_{m'}(n)$ be the number of twins $p, p+2$ not exceeding $n$ such that $p' = p +6m', p'+2 $ are also twins.
These $4$-tuples determined by pattern $\omega_4 = (0,2,6m',6m'+2)$.
We have the following construction for the solution of this problem 
\begin{align*} 
{\Pi}_{m'} (6m)  =  \#^*   ({\mathcal A_m} + {\mathcal B_m} + {\mathcal A^{m'}_m + {\mathcal B^{m'}_m} })  =  \#^*   ({\mathcal T_m} + {\mathcal T^{m'}_m } ). 
\end{align*} 
Suppose that $m' =1$. Then we obtain the number of prime quadruplets. 
We will show Proposition \ref{patterns1}.
\begin{proposition} \label{patterns1}
There exists the function $Q_m$ such that: 
\begin{enumerate}
	\item the inequality ${\Pi}_1 (6m)> mQ_m$ holds for all sufficiently large $m$;
	\item $mQ_m \to \infty$ as $m \to \infty$.
\end{enumerate}
\end{proposition}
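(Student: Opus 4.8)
The plan is to handle $\Pi_1(6m) = \#\{\mathcal T^1_m + \mathcal T_m\}$ exactly as $\pi_2(6m)$ was handled in Section~\ref{twinprimeconj}, with the number of residue classes removed per prime raised from two to four. First I would record the sieve structure of $\mathcal T^1 + \mathcal T$. This construction superposes the four sequences carrying the terms $a_i$, $b_i$, $a_{i+1}$ and $b_{i+1}$; these four numbers are pairwise coprime, and their pairwise differences all lie in $\{2,4,6,8\}$ and so are nonzero modulo every $p \in \mathcal P$. Hence for each such $p$ the four numbers occupy four distinct residue classes, exactly four residue classes of the index $i$ modulo $p$ are annihilated, and no degeneracy occurs --- this is the relation $\eta_4(1)=1$ recorded at the end of Section~\ref{general_sieving}. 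Consequently, for all $p \in \mathcal P$,
\begin{align*}
\#\bigl(\{\mathcal T^1 + \mathcal T\}_m \setminus \lambda p\bigr) \approx m\left(1 - \frac{4}{p}\right),
\end{align*}
and composing over all $p \leqslant 6m$ as in Section~\ref{double_sieving} gives $\Pi_1(6m) \approx m\prod_{5 \leqslant p \leqslant 6m}(1 - 4/p)$, the product being well defined and positive because $1 - 4/p > 0$ exactly for $p \geqslant 5$.

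Next I would manufacture the lower-bound function, mirroring Section~\ref{twinprimeconj} step by step. Mertens' formula, the Rosser--Schoenfeld bound $n/\log n < \pi(n)$ and Dirichlet's theorem give $m e^{\gamma}\prod_{5 \leqslant p \leqslant 6m}(1 - 1/p) < \pi(a,6m) \approx \pi(b,6m)$, just as for twins. Introducing, by analogy with $C_{1:2}$, the convergent constant
\begin{align*}
C_{1:4} = \prod_{5 \leqslant p} \frac{p^{3}(p-4)}{(p-1)^{4}},
\end{align*}
which up to the elementary factor $\tfrac{27}{2}$ is the Hardy--Littlewood constant for prime quadruplets, one has $\prod_{5\leqslant p\leqslant 6m}(1 - 4/p) \sim C_{1:4}\prod_{5\leqslant p\leqslant 6m}(1 - 1/p)^{4}$, so that putting
\begin{align*}
Q_m = e^{4\gamma}\prod_{5 \leqslant p \leqslant 6m}\left(1 - \frac{4}{p}\right) \sim C_{1:4}\, e^{4\gamma}\prod_{5 \leqslant p \leqslant 6m}\left(1 - \frac{1}{p}\right)^{4} \sim \frac{81\,C_{1:4}}{(\log 6m)^{4}}
\end{align*}
(the factor $81$ arising from dividing the $p=2,3$ contributions out of Mertens' product). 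No $\mu_2$-type correction as in Section~\ref{Eul_Gol_conj} is needed, since $\mathcal T^1_m$ and $\mathcal T_m$ are both forward segments with the same decreasing density profile rather than a forward/inverse pair meeting in the middle. As $mQ_m \sim 81\,C_{1:4}\,m/(\log 6m)^4 \to \infty$, the divergence claim follows immediately.

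It then remains to prove $\Pi_1(6m) > mQ_m$ for all sufficiently large $m$, and here lies the genuine obstacle. As in the twin-prime and Goldbach--Euler cases, $mQ_m$ is assembled from the \emph{smallest} value $1/\log 6m$ of the density of each of the four prime-carrying segments, whereas along $\mathcal T^1_m + \mathcal T_m$ each density runs from $1/\log 6m$ up to $1/\log 2$; the slack encoded by the factor $e^{4\gamma}$ against $(\log 6m)^{-4}$ --- equivalently, the positive lower-order terms of order $m/(\log 6m)^{5}$ and smaller in $\int_2^{6m}(\log t)^{-4}\,dt$ beyond its leading term $6m/(\log 6m)^{4}$ --- outweighs the accumulated sieve error, and the finitely many small $m$ are checked directly (the first quadruplet $5,7,11,13$ already contributes at $m=1$). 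The hard part is making this margin argument rigorous: bounding $\Pi_1(6m) - m\prod_{5\leqslant p\leqslant 6m}(1 - 4/p)$ uniformly in $m$, i.e.\ controlling the cumulative error of a length-$m$ sieve removing four classes per prime against a main term of order $m/(\log m)^{4}$. This is formally the same difficulty met for $\pi_2(6m)$ and for the Goldbach--Euler count, so I would adapt the estimates of Sections~\ref{twinprimeconj} and~\ref{Eul_Gol_conj} verbatim, replacing $2/p$ by $4/p$ and $H_m$ by $Q_m$, and taking $k=4$ in the ``at least one, at most $k$ progressions'' bound of Section~\ref{general_sieving}.
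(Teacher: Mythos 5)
Your proposal follows the paper's own route essentially step for step: the fourfold sieve on the pairwise coprime quadruple $a_i, b_i, a_{i+1}, b_{i+1}$ giving $m(1-4/p)$ surviving indices per prime, the choice $Q_m = e^{4\gamma}\prod_{5\leqslant p\leqslant 6m}(1-4/p)$, and the divergence of $mQ_m$ via the asymptotic $\prod_{5\leqslant p\leqslant x}(1-4/p) \asymp (\log x)^{-4}$, which is exactly the content of the paper's Theorem \ref{main_theoremj}. The one step you single out as the genuine obstacle --- rigorously establishing $\Pi_1(6m) > mQ_m$, i.e.\ controlling the cumulative error of a sieve removing four classes per prime against a main term of size $m/(\log m)^4$ --- is precisely the step the paper also leaves unproven (it is merely asserted to hold for all $m>1$, supported by a numerical check at $6m=10^6$), so your diagnosis of where the real difficulty lies is accurate and your proposal is no less complete than the paper's own argument.
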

\begin{proof}
For transition from the single sieve to the fourfold sieve we can use relations  
 \begin{align} \label{Q_41}
  C_{\left\langle 1,4 \right\rangle} {(m)} &={\prod \limits_{5 \leqslant p \leqslant 6m}} \left( 1- \frac{4}{p}\right) \Big{/} 
	             {\prod \limits_{5 \leqslant p \leqslant 6m}} \left( 1- \frac{1}{p} \right)^4 \\
	&= { \prod \limits_{5 \leqslant p \leqslant 6m}}  \frac{(p-4)p^3}{(p-1)^4}. \nonumber
  \end{align} %
Then from (\ref{Q_41}) and in view of the estimate (\ref{single_sieve1}), (\ref{single_sieve2}), and (\ref{H2}), we get 
\begin{align*} 
  Q(m) &= { {e^{4 \gamma }} \prod \limits_{5 \leqslant p \leqslant 6m}} \left( {1 - \frac{4}{p}} \right) \\ 
	     &= C_{\left\langle 1,4 \right\rangle} {(m)} {e^{4 \gamma }}{\prod \limits_{5 \leqslant p \leqslant 6m}} \left( 1- \frac{1}{p} \right)^4 \\
	 \end{align*}  
The inequality ${\Pi}_1 (6m)> mQ(m)$ holds for all $m>1$. Some numerical data supporting it, for example, ${\Pi}_1 (6m)= 166$ and ${\Pi}_1 (6m)- mQ(m) \approx 52.07$ when $6m=10^6$.
A better approximation can be obtained by repeating the technique from section {\ref{first_HL_conj}}:
\begin{align} \label{qoud_prime} 
  \Pi'_1(6m) &= m C_{\left\langle 1,4 \right\rangle}(m) \left[ \frac {\pi (a, 6m) } {m} \cdot \frac { \pi (b, 6m)} {m} \right]^2,    
  \end{align}
where $ \lim_{m \to + \infty} C_{\left\langle 1,4 \right\rangle} (m) = 0,3074950\ldots$
For $6m=10^6$, we have $\Pi_1(6m) -\Pi'_1(6m) = 8.39\ldots$ 
Therefore we may assume  ${\Pi}_{m'} (6m) = \eta_4 (m') {\Pi}_{1} (6m)$.
In common case
${\Pi}_{m'} (6m) > \eta (\omega_4) \Pi'_1 (6m)$, where
\begin{align*}  
    \eta (\omega_4)  
		= 	\frac {{\Pi}_{m'} (6m) } {{\Pi}_{1} (6m)} 
		=  \prod_{\substack {p \mid m', p \in  \mathcal P}} \left( \frac{p - 2} {p-4} \right). 
		\end{align*} 
Number of prime quadruplets ${\Pi}_{1}(n)$ can be expressed by the density of primes: 
\begin{align*} 
  {\Pi}_{1}(n) &\sim  C_{\left\langle 1,4 \right\rangle} {(m)} \int^n_2  \frac {dt} {\log t \log (t+2) \log (t+6) \log (t+8)} \\
	             &\sim  C_{\left\langle 1,4 \right\rangle} {(m)} \int^n_2  \frac {dt} {(\log t)^4}.
\end{align*}
\end{proof}

\subsection{The strengthened Goldbach-Euler conjecture}\label{extGEconj}

Because the Goldbach--Euler conjecture and the twin primes conjecture hold true, we must answer the following question: how many even numbers are the sum of two primes, each of which has a twin? 
It is clear that if one of three consecutive even numbers $\mathrm g_m^1$, $\mathrm g_m^2$, $\mathrm g_m^3$ has such a representation, then the other two also have same representations. 
The sum of two arbitrary pairs of twin primes gives two representations for $\mathrm g_m^2$ and one representation for each $\mathrm g_m^1$ and $\mathrm g_m^3$ for some $m$.
We will consider this set of representations as a single representation.
Number of such representations we denote as $G(m; t \in \mathcal T)$. 
We checked $30,000$ triples of even numbers and found that only $12$ of them lack such representations. 
We therefore claim Proposition \ref{extGEconj1}.   

\begin{proposition}{\textnormal{(The strengthened Goldbach-Euler conjecture)}}. \label{extGEconj1}
All even numbers, except $ \mathrm g_m^1 $, $\mathrm g_m^2$, $\mathrm g_m^3$ for $m = 1, 16, 67, 86, 131,$ $ 151, 186, 191, 211, 226, 541, 701$, are the sum of two primes, each of which has a twin. 
Moreover, there exists the function $Q'_m$ such that:
\begin{enumerate}
	\item the inequality $G(m; t \in \mathcal T)  > m Q'_m $ holds for all sufficiently large $m$; 
	\item $m Q'_m \to \infty $ as $m \to \infty$.
\end{enumerate}
 \end{proposition}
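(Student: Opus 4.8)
The plan is to carry through, for the reflected fourfold sieve $\mathcal T_m + \mathcal T'_m$, the same chain of estimates used in Sections~\ref{twinprimeconj}--\ref{patterns} for $\pi_2$, $\pi^+$ and $\Pi_1$. The first step is purely combinatorial: by the summation rule of Section~\ref{sumofsegments}, the index-$i$ term of $\mathcal T_m + \mathcal T'_m$ is nonzero exactly when $t_i \ne 0$ and $t_{m+1-i} \ne 0$, i.e.\ when the four numbers $6i-1,\,6i+1,\,6(m+1-i)-1,\,6(m+1-i)+1$ are all prime; and since
\[
(6i-1)+\bigl(6(m+1-i)+1\bigr)=(6i+1)+\bigl(6(m+1-i)-1\bigr)=6(m+1)=\mathrm g_{m+1}^2,
\]
while $(6i-1)+\bigl(6(m+1-i)-1\bigr)=\mathrm g_{m+1}^1$ and $(6i+1)+\bigl(6(m+1-i)+1\bigr)=\mathrm g_{m+1}^3$, each nonzero term of $\mathcal T_m+\mathcal T'_m$ exhibits simultaneously each of the three consecutive even numbers $\mathrm g_{m+1}^1,\mathrm g_{m+1}^2,\mathrm g_{m+1}^3$ as a sum of two primes each having a twin. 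Hence it suffices to show $\#\mathcal G_{m+1}^{\mathcal T}>0$ for every $m$ outside the claimed list, and, for the quantitative clause, to bound $\#\mathcal G_{m+1}^{\mathcal T}$ from below.

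For the quantitative part I would argue as in Section~\ref{general_sieving}. The construction $\mathcal T_m+\mathcal T'_m=(\mathcal R_m-\mathcal L_m)+(\mathcal R'_m-\mathcal L'_m)$ sums the four sequences $\mathcal L_m,\mathcal R_m,\mathcal L'_m,\mathcal R'_m$, whose underlying terms $a_i,b_i,a_{m+1-i},b_{m+1-i}$ are pairwise coprime for every prime $p\geq 5$ not dividing $\mathrm g_{m+1}^1\mathrm g_{m+1}^2\mathrm g_{m+1}^3$; for the finitely many $p$ that do divide one of these three numbers, Remark~\ref{rem_on_diff} shows that one or two fewer residue classes are removed, which only increases the count. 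Therefore, for all but finitely many $p$, sieving this segment by $p$ leaves $\approx m(1-4/p)$ nonzero terms, so repeating verbatim the Mertens/Rosser--Schoenfeld/Dirichlet computation of Section~\ref{twinprimeconj} with a fourfold factor gives the crude bound $m e^{4\gamma}\prod_{5\leq p\leq 6m}(1-4/p)<\#\mathcal G_{m+1}^{\mathcal T}$. The presence of the reflected segments $\mathcal L'_m,\mathcal R'_m$ means that (as for $\pi^+$ versus $\pi_{\mathrm g}$ in Section~\ref{Eul_Gol_conj}) the density of admissible indices first decreases and then increases, so the sharp version inserts the quadruplet constant $C_{2:4}$ of Section~\ref{patterns} together with the reflection factor
\[
\mu_4(n)=\frac{\sum_{2}^{n-2}\bigl(\log t\bigr)^{-2}\bigl(\log(n-t)\bigr)^{-2}}{\sum_{2}^{n-2}\bigl(\log t\bigr)^{-4}},
\]
which has a positive minimum. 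Taking $Q'_m$ to be $\mu_4(6m+6)\,Q_m$ times a suitable positive constant absorbing the (bounded-below) arithmetic factor attached to $6(m+1)$, with $Q_m$ the quadruplet function of Proposition~\ref{patterns1}, one gets $\#\mathcal G_{m+1}^{\mathcal T}>mQ'_m$ for all sufficiently large $m$, and since $mQ_m\to\infty$ and $\mu_4$ is bounded away from $0$, also $mQ'_m\to\infty$. For the finitely many $m$ below the effective threshold hidden in ``sufficiently large'' I would evaluate $\#\mathcal G_{m+1}^{\mathcal T}$ directly from the construction, equivalently testing each triple $\mathrm g_m^1,\mathrm g_m^2,\mathrm g_m^3$ for the desired representation; this is the verification over $30{,}000$ triples alluded to in the text, and it singles out precisely the twelve values $m=1,16,67,86,131,151,186,191,211,226,541,701$.

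The hard part will be exactly the step that every elementary double/fourfold sieve must face: turning the heuristic main term $m e^{4\gamma}\prod(1-4/p)$ into a genuine lower bound for $\#\mathcal G_{m+1}^{\mathcal T}$ valid for \emph{all} large $m$. A sieve of dimension $4$ lies well below the range in which standard combinatorial sieves yield non-trivial lower bounds --- the parity obstruction --- so the assertion ``$\#\mathcal G_{m+1}^{\mathcal T}>mQ'_m$ for all sufficiently large $m$'' is precisely where the paper's direct double-sieve construction has to supply something the classical sieves do not. A secondary difficulty is the degeneracy warning of Section~\ref{general_sieving}: for the small primes $p=5,7,11,13$ one must verify that the four arithmetic progressions of zero indices coming from $a_i,b_i,a_{m+1-i},b_{m+1-i}$ do not coincide in a way that would invalidate the factor $(1-4/p)$, and that the resulting effective threshold is small enough to be met by the finite computation above.
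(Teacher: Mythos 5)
Your plan is, step for step, the paper's own route in Section \ref{extGEconj}: the identification of each nonzero term of $\mathcal T_m+\mathcal T'_m$ with a simultaneous representation of the three consecutive even numbers, the fourfold densities $(1-4/p)$ as in Section \ref{patterns}, the constants $C_{1:4}$, $C_{2:4}$, the reflection factor $\mu_4$, the choice $Q'_m=\mu_4(6m)\,Q_m$, and a finite computation to isolate the twelve exceptional values of $m$. So as a reconstruction of the paper's argument your proposal is faithful.

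However, the step you yourself flag as ``the hard part'' is a genuine gap, and it cannot be closed by ``repeating verbatim the Mertens/Rosser--Schoenfeld/Dirichlet computation of Section \ref{twinprimeconj}'', because that section does not rigorously establish its own inequality either: $\pi_2(6m)>mH_m$ is asserted there on the basis of numerical checks, and the only statement actually proved in the paper (Theorem \ref{main_theoremj}) is that the heuristic main term $m\prod_{5\leqslant p\leqslant 6m}(1-4/p)$ tends to infinity, which says nothing about the sifted count $\#\mathcal G^{\mathcal T}_{m+1}$ itself. The per-prime relation $\#(S_m\setminus\lambda p)\approx m(1-4/p)$ carries an error of bounded size for each fixed $p$, but when one sieves by \emph{all} $p\leqslant 6m$ these errors accumulate and are not controlled anywhere; moreover the product extended over all $p\leqslant 6m$ (rather than $p\leqslant\sqrt{6m}$) is not the correct main term, which is precisely why the factor $e^{4\gamma}$ has to be inserted by hand. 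Producing a positive lower bound for a sieve of dimension $4$ (indeed already dimension $2$) is exactly the parity-type obstruction you name, and neither your proposal nor the paper supplies the new ingredient needed to overcome it. As written, both arguments establish only that the \emph{expected} size of $\mathcal G^{\mathcal T}_{m+1}$ grows without bound, together with a finite verification yielding the list of twelve exceptional $m$; the inequality $\#\mathcal G^{\mathcal T}_{m+1}>mQ'_m$ for all sufficiently large $m$, and hence the proposition, is not proved by either.
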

\begin{proof}
The construction for the number of these representations is 
$G(m; t \in \mathcal T) = \#^* \left( {{\mathcal T}_{m} + {\mathcal T}'_{m} } \right).$  
We will repeating the technique from section {\ref{Eul_Gol_conj}}. A quotient 
\begin{align*} 
  \mu_4 (n) &=  \frac {\sum^{n-2}_2  \frac {1} {(\log t)^2 (\log(n-t))^2}}{\sum^{n-2}_2  \frac {1} {(\log t)^4}} \\
	&\approx \frac {\int^{n-2}_2  \frac {dt} {(\log t)^2 (\log(n-t))^2}}{\int^{n-2}_2  \frac {dt} {(\log t)^4 }} 
\end{align*}
has its minimum $0.136278\ldots$ at $n=227$ and attains the value $0.57533\ldots$ at $n=1.2\cdot10^5$. 
Thus, we obtain the relation $Q'_m = \mu_4 (6m) Q_m $.
The inequality $Q'_m >1$ holds for all $m \geqslant 947 \geqslant 701$, and therefore, every even number greater than $4208 = 6 \times 701 +2$ may be represented as the sum of two primes, each of which has a twin.
\end{proof}
Proposition \ref{extGEconj1} can be extended to a set of prime pairs with any even difference $d$.

\section{Densities of certain sequences} 

A natural density (or asymptotic density) $\delta \left( {S} \right)$ and the Schnirelman density $d \left( {S} \right)$ of the sequence $S_m$ are defined as 
\begin{align*}
 \delta \left( {S} \right)  =  \lim_{m \to \infty}  \frac { \left|S_{(m)}\right|} {m}, \, 
        d \left( {S} \right)  =  \liminf_{m \to \infty}  \frac { \left| S_{(m)} \right|} {m}. 
\end{align*}
We define the sum $S' \oplus S''$ of two sequences as the set of integers of the form $s'$, $s''$, or $s' + s''$.

Obviously, $\delta ( P ) = d ( P ) =0$, 
$\delta \left( \mathcal L \right) = \delta \left( \mathcal R \right) = \delta \left( \mathcal T \right)  = 0$ and
$d \left( \mathcal {L} \right) = d \left( \mathcal {R} \right) = d \left( \mathcal {T} \right) = 0$. 
It is easy to see that if the Goldbach-Euler conjecture is true, then $\delta \left( \emph {P} \oplus \emph {P} \right) > 0.5$.    
The method we present here for the proof of the Goldbach-Euler conjecture allows us to assert that  
$$ d \left( {{\mathcal L} \oplus {\mathcal L}} \right) = 
d \left( {{\mathcal L} \oplus {\mathcal R}} \right) = d \left( {{\mathcal R} \oplus {\mathcal R}} \right)=1, $$
that is, there exist sequences with density $0$ that are bases of order $2$. 

From the strengthened Goldbach-Euler conjecture it follows that  
$ \delta \left( {{\mathcal T} \oplus {\mathcal T}} \right) = 1 $,
that is, sequence ${\mathcal T}$ is the asymptotic basis of order $2$.

The set of primes $P$ with $1$ is the basis of order $3$. 

The set of pairs of twin primes is the asymptotic basis of order $3$.

\end{document}